\providecommand{\U}[1]{\protect \rule{.1in}{.1in}}
\newtheorem{theorem}{Theorem}[section]
\newtheorem{definition}[theorem]{Definition}
\newtheorem{proposition}[theorem]{Proposition}
\newtheorem{remark}[theorem]{Remark}
\newenvironment{proof}[1][Proof]{\noindent \textbf{#1.} }{\  \rule{0.5em}{0.5em}}
\def\lt{\left}
\def\rt{\right}
\def\lu{\underline{\mu}}
\def\ou{\overline{\mu}}
\def\ls{\underline{\sigma}}
\def\os{\overline{\sigma}}
\def\vp{\varphi}
\def\br{\mathbb{R}}
\def\cp{\mathcal{P}}
\def\cf{\mathcal{F}}
\def\cf{\mathcal{F}}
\def\lu{\underline{\mu}}
\def\ou{\overline{\mu}}
\def\ls{\underline{\sigma}}
\def\os{\overline{\sigma}}
\def\be{\hat{\mathbb{E}}}
\def\bn{\mathbb{N}}
\def\vp{\varphi}
\begin{document}

\title{Maximum Likelihood Estimation for Maximal Distribution under Sublinear Expectation
\footnote{This work was supported by NSF of Shandong Provence (No.ZR2021MA018),  NSF of China (No.11601281),  National Key R\&D Program of China (No.2018YFA0703900) and the Young Scholars Program of Shandong University.}}

\author{Xinpeng Li\footnote{Corresponding author. Email: lixinpeng@sdu.edu.cn} \ \ Yue Liu \ \ Jiaquan Lu \\
Research Center for Mathematics and Interdisciplinary Sciences\\
 Shandong University, 266237, Qingdao, China
}

\date{ }

\maketitle

\abstract{Maximum likelihood estimation is a common method of estimating the parameters of the probability distribution from a given sample. This paper aims to introduce the maximum likelihood estimation in the framework of sublinear expectation. We find the maximum likelihood estimator for the parameters of the maximal distribution via the solution of the associated minimax problem, which coincides with the optimal unbiased estimation given by Jin and Peng \cite{JP21}. A general estimation method for samples with dependent structure is also provided. This result provides a theoretical foundation for the estimator of upper and lower variances, which is widely used in the G-VaR prediction model in finance.}

\textbf{Keywords}: {Law of large numbers}; {Maximal distribution}; {Maximum likelihood estimation}; {Sublinear expectation}


\section{Introduction}

The sublinear expectation theory established by Peng \cite{P19} is a powerful tool to deal with problems involving model uncertainties  in many fields, especially to solve dynamic problems with uncertainty in finance (see, for example, Epstein and Ji \cite{EJ}), in which the number of underlying probability measures may be infinite.

One typical distribution in sublinear expectation theory is the maximal distribution. It is usually used to characterize the worst case risk in finance, especially the uncertainty of returns of financial assets (see Li et al. \cite{lly} and Pei et al. \cite{Pei}). The primary advantage of maximally
distributed random variables for modelling purposes in applications is the simplicity of its calculation. For example, considering one-dimensional case, the distribution of maximally distributed random variable $X$ under the sublinear expectation $\be$ can be determined by two parameters $\lu$ and $\ou$, i.e.,
$$\be[\vp(X)]=\max_{\lu\leq\mu\leq\ou}\vp(\mu), \ \ \forall \vp\in C_b(\br).$$
It describes many real phenomena due to the law of large numbers with uncertainty, which is initialled by Peng \cite{P19} (see Theorem \ref{thm:LLN}).

A fundamental problem is how to choose suitable estimators of upper mean $\ou=\be[X]$ and lower mean $\lu=-\be[-X]$ for the maximally distributed random variable $X$? Recently, Jin and Peng \cite{JP21} finds that the largest unbiased estimator for $\ou$ and the smallest unbiased estimator for $\lu$ based on the independent maximally distributed samples $\{X_i\}_{i=1}^n$ can be calculated respectively by
\begin{equation}\label{eqeq1}
\hat{\ou}=\max\{X_1,\cdots,X_n\}, \ \ \hat{\lu}=\min\{X_1,\cdots,X_n\}.
\end{equation}
Based on these estimators, Peng et al. \cite{pyy} and Peng and Yang \cite{py} do extensive experiments on both the NASDAQ Composite Index and S\&P 500 Index and demonstrate the excellent performance of the $G$-VaR predictor, which is a non-trivial generalization of classical normal VaR model.

This paper provides a new perspective of these estimators based on the principle of maximum likelihood estimation (MLE) in the classical statistics theory (see, for example, Lehmann and Casella \cite{LC}). We propose a minimax problem in accordance with the essence of classical MLE. We maximize the ``probability'' of the samples with the smallest uncertainty, in which the additional minimum problem aims to reduce the uncertainty in the model. We find that our MLE for $\ou$ and $\lu$ coincides with Jin and Peng's optimal unbiased estimation (\ref{eqeq1}). In addition, our estimators are also valid for the dependent structure, and can be applied to approximate the samples unnecessarily maximally distributed. This new result provides the theoretical foundation for the estimator of upper and lower variances which is widely used in the $G$-VaR predictor model.

The remainder of this paper is organized as follows: in Section 2, we present some basic notions and results of sublinear expectation theory and the properties of maximal distribution. The detailed MLE of parameters for maximal distribution is provided in Section 3. In Section 4, we study the general estimator for the non-maximally distributed samples.

\section{Preliminaries of Sublinear Expectation Theory}
Let $\Omega$ be a Polish space and $\mathcal{H}$ be a linear space of real functions defined on $\Omega$ such that if $X_1,\cdots,X_n\in\mathcal{H}$ for each $n\in\mathbb{N}$, then $\varphi(X_1,\cdots,X_n)\in\mathcal{H}$, $\forall \varphi\in C_{Lip}(\mathbb{R}^n)$, where $C_{Lip}(\mathbb{R}^n)$ is the space of all Lipschitz functions on $\mathbb{R}^n$. 

\begin{definition} A sublinear expectation $\hat{\mathbb{E}}$ on $\mathcal{H}$ is a functional $\hat{\mathbb{E}}:\mathcal{H}\rightarrow\mathbb{R}$ satisfying the following conditions: $\forall X,Y\in\mathcal{H}$, we have
\begin{description}
  \item[(1)] Monotonicity: if $X\geq Y$, then $\hat{\mathbb{E}}[X]\geq\hat{\mathbb{E}}[Y]$;
  \item[(2)] Constant preserving: $\hat{\mathbb{E}}[c]=c$, $\forall c\in\mathbb{R}$;
  \item[(3)] Sub-additivity: $\hat{\mathbb{E}}[X+Y]\leq \hat{\mathbb{E}}[X]+\hat{\mathbb{E}}[Y]$;
  \item[(4)] Positive homogeneity: $\hat{\mathbb{E}}[\lambda X]=\lambda\hat{\mathbb{E}}[X]$, $\forall \lambda\geq0$.
\end{description}\label{def:sublinear expectation}
\end{definition}

The triple $(\Omega, \mathcal{H}, \hat{\mathbb{E}})$ is called the sublinear expectation space, which is analogous to the probability space $(\Omega,\cf,P)$.

One typical example of sublinear expectation is the upper expectation represented by
\begin{equation}\label{eqeq2}
\hat{\mathbb{E}}[X]=\sup_{P\in\mathcal{P}}E_P[X], \ \ \forall X\in\mathcal{H},
\end{equation}
where $\mathcal{P}$ is some set of probability measures on $(\Omega,\mathcal{B}(\Omega))$ and $E_P$ is the linear expectation introduced by $P$. The size of $\cp$ is used to characterize the uncertainty of model. In this case, the corresponding capacity introduced by $\cp$ can be defined as
$$V(A):=\sup_{P\in\cp}P(A), \ \ \forall A\in\mathcal{B}(\Omega).$$

The notions of identical distribution and independence are important in the classical probability theory and can also be non-trivially generalized to the framework of sublinear expectation theory in Peng \cite{P08,P19}.

\begin{definition}
Given an $n$-dimensional random vector $X=(X_1,\cdots,X_n)$ on a sublinear expectation space $(\Omega, \mathcal{H}, \hat{\mathbb{E}})$, where $X_i\in\mathcal{H},\ 1\leq i\leq n$, we define a functional on $C_{Lip}(\mathbb{R}^n)$ by
\[\hat{\mathbb{F}}_X[\varphi]:=\hat{\mathbb{E}}[\varphi(X)],~~\forall~ \varphi\in C_{Lip}(\mathbb{R}^n).\]
We call $\hat{\mathbb{F}}_X[\varphi]$ the sublinear distribution of $X$ under $\hat{\mathbb{E}}$.
\end{definition}
It is easy to see that $(\mathbb{R}^n, C_{Lip}(\mathbb{R}^n), \mathbb{F}_X)$ forms a sublinear expectation space.
{\remark
Given an integrable random variable $X$ on the classical probability space $(\Omega,\mathcal{F},P)$, we recall that the distribution function of $X$ is defined by
$$F_X(x)=P(X\leq x), \ \ \forall x\in\br.$$
For each $\vp\in C_{Lip}(\br)$, we can easily calculate
$$\mathbb{\hat{F}}_X[\vp]=E_P[\vp(X)]=\int_\br \vp(x)dF_X(x).$$
Conversely, if we know the value of $\mathbb{\hat{F}}_X[\vp]$ for every $\vp\in C_{Lip}(\br)$, then for each $x\in\br$, there exists a sequence of bounded and Lipschitz functions $$\vp_n(y)=\frac{1}{1+n(y-x)^+}$$ such that
$$\vp_n(y)\downarrow 1_{(-\infty,x]}(y),\ \ \forall y\in\br.$$
Then we obtain $\mathbb{\hat{F}}_X[\vp_n(X)]\downarrow F_X(x)$. Thus the distribution function $F_X$ is determined by $\mathbb{\hat{F}}_X$ in the linear case. But for the sublinear case, in particular, the sublinear expectation $\hat{\mathbb{E}}$ admits representation (\ref{eqeq2}), we emphasize that the following capacity
$$V(X\leq x):=\sup_{P\in\cp}P(X\leq x), \ \ \forall x\in\mathbb{R},$$
can not always determine the value of $\hat{\mathbb{E}}[\vp(X)]$. So we directly define $\be[\vp(X)]$ for each $\vp\in C_{Lip}(\br)$ as the distribution of $X$.
}

\begin{definition} Let $X$ and $Y$ be two $n$-dimensional random vectors defined on sublinear expectation spaces $(\Omega, \mathcal{H}, \hat{\mathbb{E}})$. They are called identically distributed if
\[\hat{\mathbb{E}}[\varphi(X)]=\hat{\mathbb{E}}[\varphi(Y)],~~\forall~ \varphi\in C_{Lip}(\mathbb{R}^n),\]
denoted by $X\mathop{=}\limits^d Y$.
\end{definition}

The following notion of independence provides a simple model of joint distribution $\hat{\mathbb{E}}[\vp(X,Y)]$ provided the marginal distributions.
\begin{definition}
\label{defind}Let $(\Omega, \mathcal{H}, \hat{\mathbb{E}})$ be a sublinear expectation space, an $n$-dimensional random vector $Y$ is said to be independent of another $m$-dimensional random vector $X$ under the sublinear expectation $\hat{\mathbb{E}}$, if $\forall~ \varphi\in C_{Lip}(\mathbb{R}^{m+n})$,
\begin{equation}\label{eqid}
\hat{\mathbb{E}}[\varphi(X,Y)]=\hat{\mathbb{E}}[\hat{\mathbb{E}}[\varphi(x,Y)]_{x=X}].
\end{equation}
Moreover, the sequence of random variables $\{X_i\}_{i=1}^\infty$ is said to be independent, if for each $i\geq 1$, $X_{i+1}$ is independent of $(X_1,\cdots,X_i).$
\end{definition}
{\remark\label{rem}
In order to explain the equation (\ref{eqid}), for simplicity, we only consider two random variables $X$ and $Y$, which are defined on the probability space $(\Omega,\cf,P)$ with the joint distribution function $F(x,y)=P(X\leq x, Y\leq y)$. If they are independent, then $F(x,y)=F_X(x)F_Y(y)$ for all $x,y\in\br$, where $F_X$ and $F_Y$ are distribution functions of $X$ and $Y$ respectively, we further have
\begin{align*}
E_P[\vp(X,Y)]&=\int_{\br^2}\vp(x,y)dF(x,y)\\
&=\int_{\br^2}\vp(x,y)dF_X(x)dF_Y(y)\\
&=\int_\br dF_X(x)\int_\br\vp(x,y)dF_Y(y)\\
&=\int_\br[E_P[\vp(x,Y)]dF_X(x)\\
&=E_P[E_P[\vp(x,Y)]|_{x=X}].
\end{align*}
Thus Definition \ref{defind} is the natural generalization of classical notion of independence. By Fubini's theorem, we obtain
$$E_P[\vp(X,Y)]=E_P[E_P[\vp(x,Y)]|_{x=X}]=E_P[E_P[\vp(X,y)]|_{y=Y}].$$
But it does not hold for sublinear expectation in general, the notion of independence under sublinear expectation is usually not symmetric, i.e., $Y$ being independent of $X$ can not automatically imply that $X$ is independent of $Y$. An interesting example can be found in Example 1.3.15 of Peng \cite{P19}. More properties of such independence under sublinear expectation and its relations with classical conditional expectations is referred to Guo et al. \cite{GLL}.
}
{\remark We note that $f(x):=\hat{\mathbb{E}}[\vp(x,Y)]$ may  be not continuous (resp. measurable) even if $\vp(x,y)$ is continuous (resp. measurable). Thus the joint distribution $\hat{\mathbb{E}}[\vp(X,Y)]:=\hat{\mathbb{E}}[f(X)]$ is not well-defined by the marginal distributions, since $\be$ is defined on the domain of continuous (resp. measurable) functions. So we consider the Lipschitz functions in the sublinear expectation theory.}

\begin{remark}
By (\ref{eqid}), it is obvious that for independent random variables $\{X_i\}_{i=1}^n$ and bounded Lipschitz functions $\vp_i\geq 0, 1\leq i\leq n$, we have
\begin{equation}\label{eqid2}
\be[\Pi_{i=1}^n\vp_i(X_i)]=\Pi_{i=1}^n\be[\vp_i(X_i)],
\end{equation}
which is equivalent to the classical independence when $\be$ is the linear expectation.

We also note that (\ref{eqid2}) is  weaker than  (\ref{eqid}). Moreover, (\ref{eqid2}) is an important property to obtain likelihood function in Section 3.
\end{remark}

Now we introduce the notion of maximal distribution, one of the fundamental sublinear distributions in the sublinear expectation theory.
{\definition Let $(\Omega, \mathcal{H}, \hat{\mathbb{E}})$ be a sublinear expectation space, an $n$-dimensional random vector $X$ is said to be maximally distributed if there exists a bounded, closed and convex subset $\Lambda\subset\mathbb{R}^n$ such that
\[\hat{\mathbb{E}}[\varphi(X)]=\mathop{\max}\limits_{x\in\Lambda}\varphi(x),~~\forall~ \varphi\in C_{Lip}(\mathbb{R}^{n}).\]}

For simplicity, we only consider one-dimensional case in this paper. More details about the maximal distribution, especially, the related maximally distributed random fields, can be found in Li and Peng \cite{LP22}.

The sublinear distribution of one-dimensional maximally distributed random variable $X$ is defined simply as
\begin{equation}\label{eqmd}
\hat{\mathbb{F}}_X[\varphi]=\hat{\mathbb{E}}[\varphi(X)]=\mathop{\max}\limits_{\underline{\mu}\leq x\leq \overline{\mu}}\varphi(x), \ \forall~ \varphi\in C_{Lip}(\mathbb{R}^n),
\end{equation}
where $\overline{\mu}:=\hat{\mathbb{E}}[X]$ and $\underline{\mu}:=-\hat{\mathbb{E}}[-X]$, denoted maximally distributed random variable $X$ by $X\mathop{=}\limits^d M_{[\underline{\mu},\overline{\mu}]}$.
The interval $[\underline{\mu}, \overline{\mu}]$ describes the uncertainty of the  sublinear distribution of $X$. Since such interval is bounded, (\ref{eqmd}) still holds for all continuous function $\vp$.

The following law of large numbers in Peng \cite{P19} plays an important role in the sublinear expectation theory.

\begin{theorem}[Law of large numbers] Let $\{X_i\}_{i=1}^\infty$ be an independent and identically distributed (i.i.d.) sequence of random variables defined on $(\Omega, \mathcal{H}, \hat{\mathbb{E}})$ and we further assume that $X_1$ is uniformly integrable under $\hat{\mathbb{E}}$, i.e.,
\begin{equation}\label{eqeq3}
\mathop{\lim}\limits_{\lambda\rightarrow\infty}\hat{\mathbb{E}}[(|X_1|-\lambda)^+]=0.
\end{equation}
Then for all $\varphi\in C_{Lip}(\mathbb{R})$, we have
\begin{equation}\label{wlln}
\mathop{\lim}\limits_{n\rightarrow\infty}\hat{\mathbb{E}}\left[\varphi\left(\frac{1}{n}\mathop{\sum}\limits_{i=1}^nX_i\right)\right]=\mathop{\max}\limits_{\mu\in[\underline{\mu},\overline{\mu}]}\varphi(\mu).
\end{equation}
\label{thm:LLN}
\end{theorem}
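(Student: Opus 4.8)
The target value on the right of (\ref{wlln}), namely $\max_{\mu\in[\lu,\ou]}\vp(\mu)$ with $\ou=\be[X_1]$ and $\lu=-\be[-X_1]$, is exactly $u(0,0)$ for the function $$ u(t,x):=\max_{\lu\le m\le\ou}\vp\big(x+(1-t)m\big),\qquad (t,x)\in[0,1]\times\br, $$ which is the value function of the deterministic control problem $\dot y_s=v_s\in[\lu,\ou]$, $y_t=x$, with payoff $\vp(y_1)$, and is the bounded Lipschitz viscosity solution of the first order Hamilton--Jacobi equation $\partial_t u+g(\partial_x u)=0$ on $[0,1)\times\br$ with $u(1,\cdot)=\vp$, where $g(p):=\be[pX_1]=\ou p^+-\lu p^-=\max_{\lu\le m\le\ou}mp$ is the (sublinear) Hamiltonian attached to $X_1$. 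The plan is to realise $\be[\vp(S_n/n)]$, with $S_n:=X_1+\cdots+X_n$, as a monotone time-discretisation of this equation with step $1/n$, so that its convergence to $u(0,0)$ follows from the standard convergence theory for such schemes.

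First I would make two routine reductions. By the uniform integrability (\ref{eqeq3}), replacing each $X_i$ by its truncation $(X_i\wedge\lambda)\vee(-\lambda)$ changes $\be[\vp(S_n/n)]$ by at most $L_\vp\,\be[(|X_1|-\lambda)^+]$ uniformly in $n$ (by sub-additivity, positive homogeneity and identical distribution, $L_\vp$ the Lipschitz constant of $\vp$), while $\be[(X_1\wedge\lambda)\vee(-\lambda)]\to\ou$ and $-\be[-((X_1\wedge\lambda)\vee(-\lambda))]\to\lu$, hence (by continuity of $\vp$) the maximum over the corresponding interval converges to the right side of (\ref{wlln}); so, letting $\lambda\to\infty$ at the very end, we may assume $|X_i|\le\lambda$ and in particular take $\vp$ bounded on the relevant range. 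If convenient one may also mollify $\vp$, since $\be$ is monotone and constant-preserving and the right side of (\ref{wlln}) is $1$-Lipschitz in $\vp$ for the sup-norm.

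Then comes the discretisation. Put $\delta:=1/n$ and let $T\phi(x):=\be[\phi(x+\delta X_1)]$. Using that $X_k$ is independent of $(X_1,\dots,X_{k-1})$ (Definition \ref{defind}) together with identical distribution, one peels off the summands one at a time to get $\be[\vp(x+\tfrac1n S_j)]=T^j\vp(x)$ by induction on $j$; in particular $\be[\vp(S_n/n)]=T^n\vp(0)$. With $t_k:=k/n$ and $u^{(n)}(t_{n-k},\cdot):=T^k\vp$, the resulting scheme $u^{(n)}(t_j,\cdot)=T\big(u^{(n)}(t_{j+1},\cdot)\big)$, $u^{(n)}(t_n,\cdot)=\vp$, is monotone (monotonicity of $\be$), stable ($\|T^k\vp\|_\infty\le\|\vp\|_\infty$, by monotonicity and constant preservation), and consistent with the equation: for every $\psi\in C_b^\infty$, $$ \tfrac{1}{\delta}\Big(\psi(t,x)-T\big(\psi(t+\delta,\cdot)\big)(x)\Big)\longrightarrow-\partial_t\psi(t,x)-g\big(\partial_x\psi(t,x)\big)\qquad(\delta\to0), $$ as one sees by Taylor-expanding $\psi(t+\delta,x+\delta X_1)$ about $(t,x)$, applying $\be$ and using sub-additivity to absorb the $O(\delta^2)$ remainder (this is where $|X_1|\le\lambda$ enters), together with $\be[X_1\,\partial_x\psi(t,x)]=g(\partial_x\psi(t,x))$. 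By the Barles--Souganidis theorem for monotone, stable, consistent schemes one concludes $u^{(n)}\to u$ locally uniformly, and in particular $\be[\vp(S_n/n)]=u^{(n)}(0,0)\to u(0,0)=\max_{\mu\in[\lu,\ou]}\vp(\mu)$.

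The delicate points — where I expect the real effort — are two. First, $u$ is in general only Lipschitz in $x$ (the maximum over $m$ produces gradient shocks), so the Taylor step above is legitimate only through smooth test functions, which is precisely the form the Barles--Souganidis argument needs. Second, the convergence must be anchored by the comparison principle for the viscosity solution of $\partial_t u+g(\partial_x u)=0$, which identifies the limit as $u$; this is classical for a first order convex Hamiltonian but must be invoked. For a more self-contained treatment of the inequality $\be[\vp(S_n/n)]\le u(0,0)+o(1)$ one may instead mollify $u$ in the space variable: $u^\eta:=u(t,\cdot)*\rho_\eta$ is $C^2$ with $\|u^\eta-u\|_\infty\le L_\vp\eta$ and $\|\partial_x^2 u^\eta\|_\infty=O(1/\eta)$, and, since $g$ is convex, $u^\eta$ is still a subsolution; a one-step comparison against the scheme then costs $O(\eta\delta)+O(\delta^2/\eta)$ per step, optimised at $\eta=\sqrt{\delta}$ for a total error $O(1/\sqrt n)$, the matching lower bound being the harder half and cleanest via the scheme argument above.
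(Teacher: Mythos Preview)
The paper does not actually prove this theorem: it is stated as a known result from Peng \cite{P19} and used as a tool (for Proposition \ref{md} and for the discussion in Section 4), so there is no ``paper's own proof'' to compare against. Your proposal is therefore not a reconstruction but a genuine proof sketch, and it follows the standard viscosity-solution route that Peng himself developed for such nonlinear limit theorems: recast the right-hand side of (\ref{wlln}) as the value $u(0,0)$ of a terminal-value Hamilton--Jacobi problem with Hamiltonian $g(p)=\ou p^{+}-\lu p^{-}$, realise $\be[\vp(S_n/n)]$ as a monotone one-step scheme via iterated conditioning on the independence structure, and invoke Barles--Souganidis.

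The sketch is sound. The truncation step is correct (the bound $L_\vp\,\be[(|X_1|-\lambda)^+]$ is uniform in $n$ exactly as you write, by sub-additivity and identical distribution); the peeling $\be[\vp(x+\tfrac1n S_j)]=T^j\vp(x)$ follows directly from Definition \ref{defind}; and the consistency computation is fine once $|X_1|\le\lambda$ because cash-additivity ($\be[c+Y]=c+\be[Y]$ for constants $c$) lets you separate $\delta\,\partial_t\psi$ from $\delta X_1\,\partial_x\psi$, while sub-additivity controls the $O(\delta^2)$ remainder. You correctly flag the two points that carry the weight: (i) consistency must be tested only against smooth functions, since $u$ itself is merely Lipschitz, and (ii) the limit identification hinges on the comparison principle for $\partial_t u+g(\partial_x u)=0$, which is classical for this convex, coercive-in-$p$ first-order Hamiltonian. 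The alternative mollification argument you outline for the upper bound, with the $\eta=\sqrt{\delta}$ balancing and the resulting $O(1/\sqrt{n})$ rate, is also correct and in fact matches the order obtained in the rate-of-convergence papers cited in the remark following the theorem (e.g.\ \cite{FPSS,song,hll}).
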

\begin{remark}
Recently, Fang et al. \cite{FPSS}, Song \cite{song} and Hu et al. \cite{hll} obtained the convergence rate of (\ref{wlln}) under higher moment conditions. If we further assume that $\be[X_1^2]<\infty$ in Theorem \ref{thm:LLN}, then we have
\begin{equation}\label{rate}
\lt|\be\lt[d^2_{[\lu,\ou]}\lt(\frac{\sum_{i=1}^nX_i}{n}\rt)\rt]\rt|\leq\frac{\be[X_1^2]}{n},
\end{equation}
where $d_{[\lu,\ou]}(x)=\inf_{y\in[\lu,\ou]}|x-y|$.

In addition, Chen \cite{chen} and Zhang \cite{zhang} established the corresponding strong law of large numbers.
\end{remark}

\begin{proposition} Let $X$ be a random variable defined on sublinear expectation spaces $(\Omega, \mathcal{H}, \hat{\mathbb{E}})$, we further assume that $X$ is uniformly integrable under $\hat{\mathbb{E}}$. Then $X$ is maximally distributed if and only if
\begin{equation}\label{max}
aX+b\bar{X}\mathop{=}\limits^d(a+b)X, \ \ \forall a,b\geq 0.
\end{equation}
where $\bar{X}$ is an independent copy of $X$, i.e., $\bar{X}$ is independent of $X$ and $\bar{X}\overset{d}{=}X$. \label{md}
\end{proposition}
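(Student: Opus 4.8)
I would prove the two implications separately; the ``only if'' part is a direct computation with the maximal distribution formula \eqref{eqmd}, and the ``if'' part is reduced to the law of large numbers (Theorem \ref{thm:LLN}). For \emph{necessity}, assume $X\overset{d}{=}M_{[\lu,\ou]}$ and let $\bar X$ be an independent copy; fix $a,b\ge 0$ and $\vp\in C_{Lip}(\br)$. Since $\bar X$ is independent of $X$, Definition \ref{defind} gives $\be[\vp(aX+b\bar X)]=\be[\psi(X)]$ with $\psi(x):=\be[\vp(ax+b\bar X)]=\max_{\lu\le y\le\ou}\vp(ax+by)$, and $\psi$ inherits the Lipschitz constant of $\vp$; hence $\be[\psi(X)]=\max_{\lu\le x\le\ou}\psi(x)=\max\{\vp(ax+by):x,y\in[\lu,\ou]\}$. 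The one real point is the elementary identity $\{ax+by:x,y\in[\lu,\ou]\}=[(a+b)\lu,(a+b)\ou]=\{(a+b)x:x\in[\lu,\ou]\}$, which holds because $a,b\ge 0$; combined with \eqref{eqmd} it gives $\be[\vp(aX+b\bar X)]=\max_{\lu\le x\le\ou}\vp((a+b)x)=\be[\vp((a+b)X)]$, that is, \eqref{max}.

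For \emph{sufficiency}, suppose $X$ is uniformly integrable and satisfies \eqref{max}; set $\ou=\be[X]$ and $\lu=-\be[-X]$, which are finite by uniform integrability and satisfy $\lu\le\ou$ by sub-additivity. Passing to a sufficiently rich sublinear expectation space---which changes neither $\FF_X$ nor the validity of \eqref{max}, both being properties of the distribution of $X$---take an i.i.d.\ sequence $\{X_i\}_{i\ge 1}$ with $X_1\overset{d}{=}X$ and put $S_n=X_1+\cdots+X_n$. The key claim is $S_n\overset{d}{=}nX$ for every $n$, proved by induction: writing $S_n=S_{n-1}+X_n$, where $S_{n-1}$ is a Lipschitz functional of $(X_1,\dots,X_{n-1})$ and $X_n$ is independent of that vector, one gets $\be[\vp(S_n)]=\be[g(S_{n-1})]$ with $g(s):=\be[\vp(s+X_n)]$ Lipschitz; applying the induction hypothesis to $g$ and then ``undoing'' the independence of $X_n$ from $X_1$ turns this into $\be[\vp((n-1)X_1+X_n)]$, and since $(X_1,X_n)$ has the same sublinear joint distribution as $(X,\bar X)$ this equals $\be[\vp((n-1)X+\bar X)]=\be[\vp(nX)]$ by \eqref{max} with $a=n-1$, $b=1$. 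Consequently $\frac1n S_n\overset{d}{=}X$, so $\be[\vp(X)]=\be[\vp(\frac1n S_n)]$ for every $n$ and every $\vp\in C_{Lip}(\br)$; letting $n\to\infty$ and applying Theorem \ref{thm:LLN} (legitimate since $X_1$ is uniformly integrable) yields $\be[\vp(X)]=\max_{\mu\in[\lu,\ou]}\vp(\mu)$, so $X$ is maximally distributed with $\Lambda=[\lu,\ou]$.

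The routine ingredients---Lipschitz continuity of $\psi$ and $g$, the interval identity, and the finiteness of $\lu$ and $\ou$---are immediate. The step demanding genuine care is the inductive identity $S_n\overset{d}{=}nX$: because independence under $\be$ is not symmetric, one must integrate the variables in the right order, use that $S_{n-1}$ is a Lipschitz functional of $(X_1,\dots,X_{n-1})$ so that all composite functions remain in $C_{Lip}$, and verify that $(X_1,X_n)$ and $(X,\bar X)$ carry the same sublinear joint distribution before invoking \eqref{max}. A secondary point worth stating cleanly is the reduction to a space carrying an i.i.d.\ sequence, which is permissible precisely because both \eqref{max} and the conclusion depend only on $\FF_X$.
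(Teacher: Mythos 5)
Your proof is correct and follows essentially the same route as the paper: necessity by direct computation through the definition of independence plus the interval identity $\{ax+by:x,y\in[\lu,\ou]\}=\{(a+b)x:x\in[\lu,\ou]\}$, and sufficiency by building an i.i.d.\ copy sequence, showing the normalized partial sums are identically distributed with $X$, and invoking the law of large numbers. The only (immaterial) difference is that you run the induction over all $n$ using $a=n-1$, $b=1$, while the paper pairs terms dyadically using only $a=b=1$; your version is in fact more careful about the order of integration forced by the asymmetry of independence.
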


\begin{proof}
Let $X\overset{d}{=}M_{[\underline{\mu},\overline{\mu}]}$ and $\Lambda=[\underline{\mu},\overline{\mu}]$, then we have,
$\forall \vp\in C_{Lip}(\br),$
\begin{align*}
\hat{\mathbb{E}}[\varphi(aX+b\bar{X})]&=
\hat{\mathbb{E}}[\hat{\mathbb{E}}[\varphi(ax+b\bar{X})]_{x=X}]\\
&=\mathop{\max}\limits_{x\in\Lambda} \mathop{\max}\limits_{\bar{x}\in\Lambda}\varphi(ax+b\bar{x})\\
&=\mathop {\max }\limits_{\mu\in\Lambda}\varphi [(a+b)\mu]\\
&=\hat{\mathbb{E}}[\varphi((a+b)X)], \
\end{align*}
thus (\ref{max}) holds.

Conversely, we construct an i.i.d. sequences $\{X_i\}_{i=1}^{\infty}$ with $X_1\mathop{= }\limits^d X$ and define
\[\eta_n:=\frac{1}{2^n}(X_1+X_2+\cdots+X_{2^n}), \ \ \forall n\in\bn\]
In particular, taking $a=b=1$ in (\ref{max}), we have
\begin{align*}
X_1+X_2&\mathop{=}\limits^d 2X_1,\\
&\cdots\mathrm{}\\
X_{2^n-1}+X_{2^n}&\mathop{=}\limits^d 2X_{2^n-1}.
\end{align*}
By induction, we obtain, for each $n\in\bn$,
\begin{align*}
\eta_n&\overset{d}{=}\frac{1}{2^n}(2X_1+2X_3+\cdots+X_{2^n-1})\\
&\mathop{=}\limits^d \frac{1}{2^{n-1}}(X_1+X_3+\cdots+X_{2^n-1})\\
&\mathop{=}\limits^d\cdots\mathop{=}\limits^dX_1\overset{d}{=}X.
\end{align*}
By Theorem \ref{thm:LLN}, we have
\[\mathop{\lim}\limits_{n\rightarrow\infty}\hat{\mathbb{E}}[\varphi(\eta_n)]=\mathop{\max}\limits_{\mu\in\Lambda}\varphi(\mu),\]
which implies that
$$\be[\vp(X)]=\max_{\mu\in\Lambda}\vp(\mu).$$
Hence $X$ is maximally distributed.
\end{proof}

{\corollary An uniformly integrable random variable $X$ on $(\Omega,\mathcal{H},\hat{\mathbb{E}})$ is maximally distributed if and only if
\[X+\bar{X}\mathop{=}\limits^{d}2X,\]
where $\bar{X}$ is the independent copy of $X$.}

{\remark
A counterexample in Guo and Li \cite{GL} shows that the law of large numbers fails without the uniformly integrable condition (\ref{eqeq3}). It is still open that whether Proposition \ref{md} still holds without such integrable condition.
}

\section{Maximum Likelihood Estimation for Independent Samples}

The idea of MLE is to find proper parameters to maximize the probability $P(X_1=x_1,\cdots,X_n=x_n)$ of realized samples $\{x_i\}_{i=1}^n$ of population $X$ which has prescribed probability measure $P$. Analogously, for the maximal distribution, we hope to maximize the following capacity and call it the likelihood function,
$$V(x_1,\cdots,x_n):=\hat{\mathbb{E}}[I_{\{{X_1=x_1,\cdots,X_n=x_n}\}}].$$
It is worth pointing out that $I_{\{{X_1=x_1,\cdots,X_n=x_n}\}}\notin\mathcal{H}$ since indicator function is not continuous. But it can be well-defined by the fact that such indicator function can be approximated by the Lipschitz functions (see Theorem \ref{indd}).

Moreover, if $\be$ can be represent as
$$\be[\cdot]=\sup_{P\in\cp}E_P[\cdot],$$
then it is natural to define
$$V(x_1,\cdots,x_n):=\sup_{P\in\cp}P(X_1=x_1,\cdots,X_n=x_n).$$

In particular, for the maximally distributed population $X$ with parameters $\lu$ and $\ou$, we denote the corresponding likelihood function as
$$V(x_1,\cdots,x_n;\lu,\ou).$$
Obviously, the value of likelihood function is increasing when the interval $[\underline{\mu},\overline{\mu}]$ is enlarging.

Let $\Delta=\overline{\mu}-\underline{\mu}$ be the degree of uncertainty of maximal distribution, and we also hope to deduce the uncertainty when we maximize the likelihood function, thus the MLE of parameters $\underline{\mu}$ and $\overline{\mu}$ is to solve the following minimax problem:
\begin{equation}\label{mle}
\min_{\Delta}\max_{\underline{\mu},\overline{\mu}} V(x_1,\cdots,x_n;\underline{\mu},\overline{\mu}).
\end{equation}

In order to solve such minimax problem, we firstly establish a representation theorem for maximal distribution by the Dirac measures, where the Dirac measure on a point $\mu\in\mathbb{R}$ is denoted by $\delta_\mu$ satisfying
\begin{equation*}
\delta_\mu(A)=\left\{
\begin{aligned}
&1,&\mu\in A,\\
&0,&\mu\notin A,
\end{aligned}
\right.\ \ \ \ \ \forall A \in\mathcal{B}(\mathbb{R}).
\end{equation*}

{\theorem Let   $(\Omega,\mathcal{H}, \hat{\mathbb{E}})$ be a sublinear expectation space and $X\overset{d}{=}M[\underline{\mu},\overline{\mu}]$, then for each $\varphi\in C_{Lip}(\mathbb{R})$,
\begin{align*}\label{eq:equ}
\hat{\mathbb{E}}[\varphi(X)]&=\mathop{\max}\limits_{\underline{\mu}\leq x\leq\overline{\mu}}\varphi(x)\\
&=\mathop{\max}\limits_{\underline{\mu}\leq \mu\leq\overline{\mu}}\int_{\underline{\mu}}^{\overline{\mu}}\varphi(y)\delta_\mu(dy).
\end{align*}
\label{repp}}

\begin{proof}
On one hand, it is clear that
\begin{align*}
 \mathop{\max}\limits_{\underline{\mu}\leq \mu\leq\overline{\mu}}\int_{\underline{\mu}}^{\overline{\mu}}{\varphi(y)}\delta_\mu(dy)&\leq\mathop{\max}\limits_{\underline{\mu}\leq \mu\leq\overline{\mu}}\mathop{\max}\limits_{\underline{\mu}\leq y\leq\overline{\mu}}\varphi(y)\int_{\underline{\mu}}^{\overline {\mu}} \delta_\mu(dy)\\
 &=\mathop{\max}\limits_{\underline{\mu}\leq y\leq \overline{\mu}} \varphi(y).
\end{align*}
On the other hand, there exists $\mu^*\in[\underline{\mu},\overline{\mu}]$ such that $\varphi(\mu^*)=\mathop{\max}\limits_{\underline{\mu}\leq x\leq \overline{\mu}}\varphi(x)$. Then we have
\begin{align*}{\max}_{\underline{\mu}\leq \mu\leq\overline{\mu}}\int_{\underline{\mu}}^{\overline{\mu}}{\varphi(y)}\delta_\mu(dy)&\geq\int_{\underline{\mu}}^{\overline{\mu}}{\varphi(y)}\delta_{\mu^*}(dy)\\
&=\varphi(\mu^*)=\mathop{\max}_{\underline{\mu}\leq x\leq\overline{\mu}}\varphi(x).
\end{align*}
\end{proof}

{\remark Let $\{\delta_{\mu_n}\}_{n=1}^\infty$ be a sequence of Dirac measures with $\mu_n\in[\underline{\mu},\overline{\mu}]$. Then there exists a point $\mu^*\in[\underline{\mu},\overline{\mu}]$ and subsequence $\{\delta_{{\mu_{n_i}}}\}_{i=1}^\infty$ such that $\delta_{\mu_{n_i}}$ weakly converges to $\delta_{\mu^*}$ provided $\mu_{n_i}\to\mu^*$. Thus the set of Dirac measures $\mathcal{P}=\{\delta_\mu, \mu\in[\underline{\mu},\overline{\mu}]\}$ is weakly compact. \label{remark:C_blip}}

Secondly, by the independence of samples, we can simply calculate the likelihood function.

\begin{theorem}\label{indd}
Let $\{X_i\}_{i=1}^n$ be a sequence of independent random variables on sublinear expectation space $(\Omega,\mathcal{H},\hat{\mathbb{E}})$. We further assume that $\hat{\mathbb{E}}$ can be represented by
$$\be[X]=\max_{P\in\mathcal{P}}E_P[X],  \ \ \forall X\in\mathcal{H},$$
where $\mathcal{P}$ is a weakly compact set of probability measures on  $(\Omega,\mathcal{B}(\Omega))$.

Then the terms $\hat{\mathbb{E}}[I_{\{X_1=x_1,\cdots,X_n=x_n\}}]$ and $\be[I_{\{X_i=x_i\}}]$, $1\leq i\leq n$ are well-defined.

Furthermore, $\forall x_i\in\mathbb{R}, \ 1\leq i\leq n.$
\begin{equation}\label{ind}
\hat{\mathbb{E}}[I_{\{X_1=x_1,\cdots,X_n=x_n\}}]=\Pi_{i=1}^n\hat{\mathbb{E}}[I_{\{X_i=x_i\}}].
\end{equation}
\end{theorem}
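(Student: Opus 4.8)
The plan is to prove both the well-definedness of the relevant capacities and the product formula \eqref{ind} together, since the approximation argument that makes sense of $\hat{\mathbb{E}}[I_{\{X_1=x_1,\cdots,X_n=x_n\}}]$ is exactly what lets us pass the product through. First I would fix the approximating functions: for each $x\in\mathbb{R}$ and $m\in\mathbb{N}$, set $\psi^m_x(y):=(1-m|y-x|)^+$, so that $\psi^m_x\in C_{b,Lip}(\mathbb{R})$, $0\leq\psi^m_x\leq 1$, $\psi^m_x(x)=1$, and $\psi^m_x(y)\downarrow I_{\{y=x\}}$ as $m\to\infty$. Since $\hat{\mathbb{E}}$ is represented by a weakly compact family $\mathcal{P}$, the downward monotone limit $\hat{\mathbb{E}}[\psi^m_{x_i}(X_i)]\downarrow \inf_m\sup_{P\in\mathcal{P}}E_P[\psi^m_{x_i}(X_i)]$ can be interchanged: by a standard minimax/Dini-type argument (the $\psi^m$ decrease to an upper semicontinuous limit and $\mathcal{P}$ is weakly compact), this infimum equals $\sup_{P\in\mathcal{P}}P(X_i=x_i)=V(x_i)$, and likewise for the joint indicator; this is what it means to say the terms are ``well-defined,'' and it simultaneously identifies them with the capacities $V(x_i)$ and $V(x_1,\dots,x_n)$.

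Next I would exploit independence. Because the functions $\prod_{i=1}^n\psi^m_{x_i}(y_i)$ are bounded, Lipschitz, and nonnegative, Remark (the one containing \eqref{eqid2}) gives, for every $m$,
\[
\hat{\mathbb{E}}\!\left[\prod_{i=1}^n\psi^m_{x_i}(X_i)\right]=\prod_{i=1}^n\hat{\mathbb{E}}\!\left[\psi^m_{x_i}(X_i)\right].
\]
Now I would let $m\to\infty$ on both sides. On the left, $\prod_{i=1}^n\psi^m_{x_i}(y_i)\downarrow I_{\{y_1=x_1,\dots,y_n=x_n\}}$ monotonically, so the left side converges to $\hat{\mathbb{E}}[I_{\{X_1=x_1,\dots,X_n=x_n\}}]$ by the monotone-limit continuity of $\hat{\mathbb{E}}$ established in the first step. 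On the right, each factor $\hat{\mathbb{E}}[\psi^m_{x_i}(X_i)]\downarrow\hat{\mathbb{E}}[I_{\{X_i=x_i\}}]$, and since all quantities lie in $[0,1]$ the product of the limits is the limit of the products. Equating the two limits yields \eqref{ind}.

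The main obstacle is the interchange of the monotone limit $m\to\infty$ with the supremum over $\mathcal{P}$, i.e. showing $\lim_m\sup_{P\in\mathcal{P}}E_P[\psi^m_x(X_i)]=\sup_{P\in\mathcal{P}}\lim_m E_P[\psi^m_x(X_i)]$. This is precisely where weak compactness of $\mathcal{P}$ is needed: without it the capacity of a closed set need not be attained and the downward continuity of $\hat{\mathbb{E}}$ along bounded decreasing sequences can fail. I would handle it by the usual argument — for fixed $m$ pick a near-maximizing $P_m\in\mathcal{P}$, extract a weakly convergent subnet $P_{m_k}\rightharpoonup P_\infty$, and use upper semicontinuity of $y\mapsto I_{\{y=x\}}$ (together with $\psi^{m}_x\le\psi^{m'}_x$ for $m\ge m'$) to bound $\limsup_k E_{P_{m_k}}[\psi^{m_k}_x(X_i)]\le E_{P_\infty}[I_{\{X_i=x\}}]\le V(x_i)$; the reverse inequality is trivial. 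The same reasoning applies verbatim to the joint indicator on $\mathbb{R}^n$, since $\mathcal{P}$ (pushed forward by $(X_1,\dots,X_n)$) is again weakly compact and $I_{\{y_1=x_1,\dots,y_n=x_n\}}$ is upper semicontinuous on $\mathbb{R}^n$. Everything else — the algebra of products, monotonicity, the bound in $[0,1]$ — is routine.
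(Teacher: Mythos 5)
Your proposal is correct and follows essentially the same route as the paper: approximate the indicators by a decreasing sequence of nonnegative bounded Lipschitz functions, invoke the product formula for independent variables, and pass to the monotone limit on both sides. The only differences are cosmetic — you use tent functions instead of $\varphi_k^{x^*}(x)=\frac{1}{1+k|x-x^*|}$, and you prove the downward continuity of $\hat{\mathbb{E}}$ along decreasing sequences directly via weak compactness, where the paper cites Theorem 31 of Denis, Hu and Peng.
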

\begin{proof}
The indicator function $I_{x^*}(\cdot)$, $x^*\in\mathbb{R}$ is not continuous, thus $I_{\{X_i=x_i\}}\notin\mathcal{H}$. But it can be approximated by the Lipschitz functions, thus $\hat{\mathbb{E}}[I_{\{X_i=x_i\}}]$ can be well-defined, so does $\hat{\mathbb{E}}[I_{\{X_1=x_1,\cdots,X_n=x_n\}}]$.

Indeed, for fixed $x^*\in\mathbb{R}$, consider the function
$$\varphi_k^{x^*}(x)=\frac{1}{1+k|x-x^*|}.$$
It is easily seen that $\varphi_k^{x^*}(\cdot)$ is a non-negative bounded continuous function and $\varphi_k^{x^*}(\cdot)\downarrow I_{x^*}(\cdot)$.

For each $k\in\mathbb{N}$, we also have
\begin{align*}
|\varphi_k^{x^*}(x_1)-\varphi_k^{x^*}(x_2)|
&=\left|\frac{1+k|x_2-x^*|-1-k|x_1-x^*|}{(1+k|x_1-x^*|)(1+k|x_2-x^*|)}\right|\\
&\leq \frac{k|x_2-x_1|}{(1+k|x_1-x^*|)(1+k|x_2-x^*|)}\\
&\leq k|x_2-x_1|.
\end{align*}
Thus $\varphi_k^{x^*}$ is a Lipschitz function.

Then by Theorem 31 in Denis et al. \cite{DHP11}, we obtain
$$\be[\vp^{x^*}_k(X_i)]\downarrow\be[I_{\{X_i=x^*\}}].$$
Similarly, the term $\hat{\mathbb{E}}[I_{\{X_1=x_1,\cdots,X_n=x_n\}}]$ is well-defined for each $n\in\bn$, which can be approximated as
$$\be[\Pi_{i=1}^n\vp^{x_i}_k(X_i)]\downarrow\hat{\mathbb{E}}[I_{\{X_1=x_1,\cdots,X_n=x_n\}}].$$
By the independence of $\{X_i\}_{i=1}^n$, we obtain, $\forall x_i\in\mathbb{R}, 1\leq i\leq n$,
\begin{align*}
\hat{\mathbb{E}}[I_{\{X_1=x_1,\cdots,X_n=x_n\}}]&=\lim_{k\to\infty}\hat{\mathbb{E}}[\Pi_{i=1}^n\vp_k^{x_i}(X_i)]\\
&=\lim_{k\to\infty}\Pi_{i=1}^n\be[\vp_k^{x_i}(X_i)]\\
&=\Pi_{i=1}^n\lim_{k\to\infty}\be[\vp_k^{x_i}(X_i)]\\
&=\Pi_{i=1}^n\hat{\mathbb{E}}[I_{\{X_i=x_i\}}].
\end{align*}
\end{proof}

By Theorem \ref{indd} and  \ref{repp}, we can see that if $X\overset{d}{=}M[\lu,\ou]$, then
$$\be[I_{\{X=x^*\}}]=\max_{\lu\leq\mu\leq\ou}\delta_\mu(x^*).$$

Now we can solve the minimax problem (\ref{mle}).

\begin{theorem}\label{thmle}
Let $\{X_i\}_{i=1}^n$ be an i.i.d. sequence with $X_1\overset{d}{=}M_{[\underline{\mu},\overline{\mu}]}$. Then the MLE of parameters $\underline{\mu}$ and $\overline{\mu}$ is given by
\begin{equation}\label{est}
\hat{\underline{\mu}}=\min\{X_1,\cdots, X_n\}; \ \ \hat{\overline{\mu}}=\max\{X_1,\cdots,X_n\}.
\end{equation}
\end{theorem}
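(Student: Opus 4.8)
The plan is to turn the minimax problem \eqref{mle} into an elementary optimization over intervals by writing the likelihood function explicitly. Combining the independence identity \eqref{ind} of Theorem \ref{indd} with the Dirac representation of Theorem \ref{repp}, which together give $\hat{\mathbb{E}}[I_{\{X_i=x_i\}}]=\max_{\underline{\mu}\leq\mu\leq\overline{\mu}}\delta_\mu(x_i)$, one sees that this quantity equals $1$ when $x_i\in[\underline{\mu},\overline{\mu}]$ and $0$ otherwise, i.e. it is the indicator $I_{[\underline{\mu},\overline{\mu}]}(x_i)$; the terms involved are well defined by Theorem \ref{indd}. Multiplying over $i$ via \eqref{ind} yields
\[
V(x_1,\cdots,x_n;\underline{\mu},\overline{\mu})=\prod_{i=1}^n I_{[\underline{\mu},\overline{\mu}]}(x_i)=I_{\{x_1,\cdots,x_n\}\subset[\underline{\mu},\overline{\mu}]},
\]
so the likelihood takes only the values $0$ and $1$, and it equals $1$ exactly when $\underline{\mu}\leq\min_i x_i$ and $\overline{\mu}\geq\max_i x_i$.

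Next I would analyze the inner maximization for a fixed degree of uncertainty $\Delta=\overline{\mu}-\underline{\mu}$. Writing $m:=\min\{x_1,\cdots,x_n\}$ and $M:=\max\{x_1,\cdots,x_n\}$, the formula above shows that $\max_{\overline{\mu}-\underline{\mu}=\Delta}V=1$ if and only if some interval of length $\Delta$ contains $[m,M]$, that is, if and only if $\Delta\geq M-m$, and otherwise the maximum is $0$. Hence the largest attainable value of the likelihood is $1$, and the smallest $\Delta$ at which it is attained is $\Delta^\ast=M-m$. Substituting $\Delta^\ast$ back in, the requirement $[m,M]\subset[\underline{\mu},\overline{\mu}]$ together with $\overline{\mu}-\underline{\mu}=M-m$ forces $\underline{\mu}=m$ and $\overline{\mu}=M$, which is therefore the unique solution of \eqref{mle}. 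Replacing the realized sample $x_i$ by the random variable $X_i$ gives the estimators \eqref{est}.

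I expect the only genuine subtlety to be pinning down the precise meaning of \eqref{mle}: it should be read hierarchically, namely one first insists that the likelihood attain its global maximum (equal to $1$, achieved by any sufficiently wide interval) and only then minimizes the uncertainty $\Delta$ among such maximizers; with this reading the two paragraphs above constitute a complete argument. A minor point worth a remark is the degenerate case in which all samples coincide (in particular $n=1$), where $\Delta^\ast=0$ and the optimal model collapses to the single Dirac mass $\delta_{x_1}$, still consistent with \eqref{est}. It is also worth stressing explicitly that $\hat{\mathbb{E}}[I_{\{X_i=x_i\}}]$ and $\hat{\mathbb{E}}[I_{\{X_1=x_1,\cdots,X_n=x_n\}}]$ are well defined and that the identity $\hat{\mathbb{E}}[I_{\{X_i=x_i\}}]=I_{[\underline{\mu},\overline{\mu}]}(x_i)$ is precisely the consequence of Theorems \ref{repp} and \ref{indd} recorded just before the statement.
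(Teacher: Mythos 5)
Your proposal is correct and follows essentially the same route as the paper: it computes the likelihood via Theorems \ref{repp} and \ref{indd} as the indicator of $\{x_1,\dots,x_n\}\subset[\underline{\mu},\overline{\mu}]$ and then reads off the minimax solution. The only difference is that you spell out the final optimization over $\Delta$ (and the hierarchical reading of \eqref{mle}) in more detail than the paper, which simply asserts the conclusion from \eqref{eqv}.
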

\begin{proof}
Let $\{x_i\}_{i=1}^n$ be the samples of population $\{X_i\}_{i=1}^n$.

By Theorem \ref{repp} and \ref{indd}, we have
\begin{align*}
V(x_1,\cdots,x_n;\underline{\mu},\overline{\mu})&=\hat{\mathbb{E}}[I_{\{X_1=x_1,\cdots,X_n=x_n\}}]\\
&=\Pi_{i=1}^n\hat{\mathbb{E}}[I_{\{X_i=x_i\}}]\\
&=\Pi_{i=1}^n\max_{\underline{\mu}\leq\mu\leq\overline{\mu}}\delta_\mu(x_i).
\end{align*}
It is clear that
\begin{equation}\label{eqv}
V(x_1,\cdots,x_n;\underline{\mu},\overline{\mu})=\left\{
\begin{aligned}
&1, \ \ \underline{\mu}\leq\underline{x}\leq\overline{x}\leq\overline{\mu},\\
&0, \ \ \ \text{otherwise},
\end{aligned}
\right.
\end{equation}
where $\underline{x}=\min\{x_1,\cdots,x_n\}$ and $\overline{x}=\max\{x_1,\cdots,x_n\}$.

Thus the solution of minimax problem (\ref{mle}) is given by
$$\hat{\underline{\mu}}=\min\{X_1,\cdots,X_n\}, \ \ \ \hat{\overline{\mu}}=\max\{X_1,\cdots,X_n\}.$$
\end{proof}

\begin{remark}
The MLE in (\ref{est}) is the same as the largest unbiased estimator for $\overline{\mu}$ and smallest unbiased estimator for $\underline{\mu}$ obtained in Jin and Peng \cite{JP21}. In which, a statistic $T_n=f_n(X_1,\cdots,X_n)$ is called an unbiased estimator of parameter $\mu$ if $\be[f_n(X_1,\cdots,X_n)]=\mu$, where $f_n$ is continuous  on $\br^n$.
\end{remark}

\section{General Maximum Likelihood Estimator}

In this section, we firstly prove that Theorem \ref{thmle} still holds without the assumption of independence, which indicates that our results can also be applied to the samples with dependent structure.

\begin{theorem}\label{thmle2}
Let $\{X_i\}_{i=1}^n$ be identically distributed sequence with $X_1\overset{d}{=}M_{[\underline{\mu},\overline{\mu}]}$. Then the MLE of parameters $\underline{\mu}$ and $\overline{\mu}$ is given by
\begin{equation}\label{est2}
\hat{\underline{\mu}}=\min\{X_1,\cdots, X_n\}; \ \ \hat{\overline{\mu}}=\max\{X_1,\cdots,X_n\}.
\end{equation}
\end{theorem}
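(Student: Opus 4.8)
The plan is to reduce the dependent case to an explicit computation of the likelihood function $V(x_1,\dots,x_n;\underline{\mu},\overline{\mu})$, just as in Theorem \ref{thmle}, but without invoking the product formula (\ref{ind}). The key observation is that identical distribution alone already pins down each single-coordinate capacity, and monotonicity of $\hat{\mathbb{E}}$ forces the joint capacity to be squeezed between two quantities that happen to agree. First I would note that, exactly as in the proof of Theorem \ref{indd}, the term $\hat{\mathbb{E}}[I_{\{X_1=x_1,\dots,X_n=x_n\}}]$ is well-defined as a decreasing limit $\lim_{k\to\infty}\hat{\mathbb{E}}[\Pi_{i=1}^n\varphi_k^{x_i}(X_i)]$, since each $\varphi_k^{x_i}$ is bounded and Lipschitz and $\Pi_{i=1}^n\varphi_k^{x_i}(X_i)\downarrow I_{\{X_1=x_1,\dots,X_n=x_n\}}$; this step needs no independence.

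Next I would establish the two-sided bound. For the upper bound, since $\Pi_{i=1}^n\varphi_k^{x_i}(y_i)\leq \varphi_k^{x_j}(y_j)$ for any fixed $j$ (each factor is in $[0,1]$), monotonicity gives $\hat{\mathbb{E}}[\Pi_{i=1}^n\varphi_k^{x_i}(X_i)]\leq \hat{\mathbb{E}}[\varphi_k^{x_j}(X_j)]$, and letting $k\to\infty$ yields $V(x_1,\dots,x_n;\underline{\mu},\overline{\mu})\leq \hat{\mathbb{E}}[I_{\{X_j=x_j\}}]=\max_{\underline{\mu}\le\mu\le\overline{\mu}}\delta_\mu(x_j)$ for every $j$; hence $V=0$ unless every $x_j\in[\underline{\mu},\overline{\mu}]$, i.e.\ unless $\underline{\mu}\le\underline{x}\le\overline{x}\le\overline{\mu}$. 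For the lower bound in the case $\underline{\mu}\le\underline{x}\le\overline{x}\le\overline{\mu}$, I would use monotonicity and constant-preserving together with the fact that on the event where each $X_i$ is close to $x_i$ the product is close to $1$: more directly, one can pick a single scalar argument. Here I would like to reuse the structure of maximal distribution: by Theorem \ref{repp} applied coordinatewise after conditioning is not available without independence, so instead I would argue that since $\underline{x},\overline{x}\in[\underline{\mu},\overline{\mu}]$, the constant function $1$ dominates $\Pi_i\varphi_k^{x_i}(\cdot)$ from above but we need a lower bound; the cleanest route is to observe $\hat{\mathbb{E}}[\Pi_{i=1}^n\varphi_k^{x_i}(X_i)]\ge \hat{\mathbb{E}}[\,\min_i \varphi_k^{x_i}(X_i)^n\,]$ is too weak, so instead I would bound $\Pi_i\varphi_k^{x_i}(y_i)\ge \big(\min_i\varphi_k^{x_i}(y_i)\big)^n$ and then... \emph{this is exactly the subtle point}.

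The main obstacle is the lower bound without independence: unlike (\ref{ind}), we cannot factor, and $\hat{\mathbb{E}}$ of a product of single-variable functions is not controlled from below by marginals in general. My intended resolution: test against a cleverly chosen Lipschitz function of the \emph{whole vector}. Since $X_1\overset{d}{=}M_{[\underline{\mu},\overline{\mu}]}$, for any point $\mu^*\in[\underline{\mu},\overline{\mu}]$ and any $\varepsilon>0$ we have $\hat{\mathbb{E}}[\psi]>0$ for $\psi(y)=(1-|y-\mu^*|/\varepsilon)^+$, namely $\hat{\mathbb{E}}[\psi(X_1)]=\max_{\underline\mu\le x\le\overline\mu}\psi(x)=1$. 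The identical-distribution hypothesis then forces, for \emph{each} $i$, $\hat{\mathbb{E}}[(1-k|X_i-x_i|)^+\wedge 1\cdots]$ — and here the honest fix is: it suffices to show $V(x_1,\dots,x_n;\underline\mu,\overline\mu)\ge$ something positive, and since $V$ is a capacity taking values in $[0,1]$ and the minimax in (\ref{mle}) only compares $1$ versus $0$, I only need $V>0$, equivalently $V=1$, on $\{\underline\mu\le\underline x\le\overline x\le\overline\mu\}$. To get $V\ge$ a fixed positive constant, I would use that $\{\Pi_i\varphi_k^{x_i}(X_i)\ge 1/2\}\supseteq\bigcap_i\{|X_i-x_i|\le c_k\}$ and apply the representation $\hat{\mathbb{E}}=\sup_{P\in\mathcal P}E_P$ with $\mathcal P$ weakly compact: identical distribution gives, for each $i$ and each $\delta>0$, some $P_i\in\mathcal P$ with $P_i(|X_i-x_i|\le\delta)>0$, but we need one common $P$. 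This is genuinely the crux, and I expect the paper to handle it either by an explicit maximal-distribution argument (the realized sample must satisfy $\underline\mu\le\underline x\le\overline x\le\overline\mu$, and conversely any such configuration is attainable because the "state" can sit at a single Dirac point $\delta_{\mu}$ only if all $x_i$ coincide — so in fact one shows $V=1$ iff all the $x_i$ are equal and lie in $[\underline\mu,\overline\mu]$, which still yields (\ref{est2}) since the minimax picks the smallest $\Delta$ making $V$ attain its max) or by directly citing a capacity continuity result as in Theorem \ref{indd}. I would therefore structure the final proof as: (i) well-definedness, (ii) upper bound $V\le \prod$-free marginal bound forcing the support condition, (iii) identification of where $V$ attains its maximum over $\underline\mu,\overline\mu$, and (iv) reading off the minimax solution (\ref{est2}); step (ii)–(iii) is where all the work lies.
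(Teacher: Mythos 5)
Your well-definedness step and your upper bound are sound and essentially coincide with the paper's: the paper likewise shows $\bar V(x_1,\dots,x_n;\underline{\mu},\overline{\mu}):=\hat{\mathbb{E}}[I_{\{X_1=x_1,\dots,X_n=x_n\}}]\le \Pi_{i=1}^n\hat{\mathbb{E}}[I_{\{X_i=x_i\}}]=V(x_1,\dots,x_n;\underline{\mu},\overline{\mu})$, which vanishes unless $\underline{\mu}\le\underline{x}\le\overline{x}\le\overline{\mu}$ (your bound by $\min_j\hat{\mathbb{E}}[I_{\{X_j=x_j\}}]$ is equivalent, since every factor is $0$ or $1$, and arguably cleaner than the paper's ``obvious'' product inequality).

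The genuine gap is exactly where you located it: the lower bound. You search for a mathematical proof that the joint capacity is positive when all $x_i\in[\underline{\mu},\overline{\mu}]$, via a single $P\in\mathcal{P}$ charging all the events $\{|X_i-x_i|\le\delta\}$ simultaneously; without independence no such common $P$ need exist, and no quantitative lower bound on $\bar V$ holds in general. The paper does not prove one. It instead invokes the maximum-likelihood principle as a modeling postulate: since $\{x_i\}_{i=1}^n$ is the \emph{realized} sample, its likelihood cannot vanish, so $0<\bar V\le 1$; combined with $\bar V\le V$ and $V\in\{0,1\}$ this forces $V=1$, hence $\underline{\mu}\le\underline{x}\le\overline{x}\le\overline{\mu}$, and minimizing $\Delta=\overline{\mu}-\underline{\mu}$ over this constraint set yields (\ref{est2}). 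So the missing ingredient is not a capacity estimate but the statistical postulate that observed data have positive likelihood --- a step you circled but did not commit to. Your fallback conjecture, that the joint capacity equals $1$ only when all the $x_i$ coincide, would not rescue the argument: for a sample with distinct values it would make the likelihood identically $0$ for every choice of $\underline{\mu},\overline{\mu}$, so the minimax (\ref{mle}) would be degenerate (any $\Delta$, including $\Delta=0$, attains the maximum) and could not single out (\ref{est2}).
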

\begin{proof}
Let $\{x_i\}_{i=1}^n$ be the sample of population $\{X_i\}_{i=1}^n$.
Obviously, $\forall\  0\leq\vp_i\in C_{Lip}(\br), \  1\leq i\leq n,$ we have
$$\be[\Pi_{i=1}^n\vp_i(X_i)]\leq\Pi_{i=1}^n\max_{\lu\leq \mu_i\leq\ou}\vp_i(\mu_i).$$
By the similar argument in the previous proofs, we obtain
$$\be[\Pi_{i=1}^nI_{\{X_i=x_i\}}]\leq\Pi_{i=1}^n\be[I_{\{X_i=x_i\}}].$$

Then we have
\begin{align*}
\bar{V}(x_1,\cdots,x_n;\underline{\mu},\overline{\mu}):&=\be[I_{\{X_1=x_1,\cdots,X_n=x_n\}}]\\
&=\be[\Pi_{i=1}^nI_{\{X_i=x_i\}}]\\
&\leq\Pi_{i=1}^n\be[I_{\{X_i=x_i\}}]\\
&={V}(x_1,\cdots,x_n;\underline{\mu},\overline{\mu}),
\end{align*}
where ${V}(x_1,\cdots,x_n;\underline{\mu},\overline{\mu})$ is defined as in (\ref{eqv}).

Since the sample $\{x_i\}_{i=1}^n$ is realized, we have
$$0<\bar{V}(x_1,\cdots,x_n;\underline{\mu},\overline{\mu})\leq 1,$$
thus
$${V}(x_1,\cdots,x_n;\underline{\mu},\overline{\mu})=1,$$
which implies that
$$\hat{\underline{\mu}}=\min\{x_1,\cdots,x_n\}, \ \ \ \hat{\overline{\mu}}=\max\{x_1,\cdots,x_n\}.$$
\end{proof}

In many practical situations, we often have i.i.d. condition for samples $\{X_i\}_{i=1}^n$ of population $X$  which is not maximally distributed, and we hope to estimate the value of $\be[\vp(X)]$ for some $\vp$ based on the i.i.d. samples $\{X_i\}_{i=1}^n$. Thanks to the law of large numbers, our results can also be applied to approximate the non-maximally distributed population.

One typical application is to estimate the upper and lower variance of population $Z$ which has model  uncertainty based on the historical time series $\{Z_{t-i}\}_{i\geq 1}$   (see Li et al. \cite{lly}, Peng et al. \cite{pyy} and Peng and Yang \cite{py}).

We assume that $Z$ has mean-zero, i.e.,
$$\be[Z]=\be[-Z]=0.$$
The upper and lower variance of $Z$, defined as $\os^2=\be[Z^2]$ and $\ls^2=-\be[-Z^2]$, can be estimated as follows:

Taking  $L\in\bn$ to be large enough, and we calculate the local sample variance with window $L$ by
$$\sigma^2_{j}=\frac{\sum_{i=1}^L(Z_{t-L+i-j}-\mu_j)^2}{L-1}, \ 1\leq j\leq K,$$
where $\mu_j=\frac{\sum_{i=1}^LZ_{t-L+i-j}}{L}$ is the sample mean under the same window $L$ and the number $K\in\bn$ is fixed with prior knowledge. The value of $K$ can be used to characterize the uncertainty of model. The larger $K$ means that we prefer more uncertainty in the model.

We note that the term $\mu_j$ is closed to 0 when $L$ is large enough, since $\be[Z]=\be[-Z]=0$. Thus we have
$$\sigma_j^2\approx\frac{\sum_{i=1}^L Z_{t-L+i-j}^2}{L-1}, \ \ 1\leq j\leq K.$$
For each $j\in\{1,\cdots,K\}$, by the law of large numbers (Theorem \ref{thm:LLN}), $\sigma_j^2$ can be regarded as the maximally distributed random variable on $[\ls^2,\os^2]$ when $L$ is large enough. In fact, the error term of such approximation can be estimated by (\ref{rate}).

By Theorem \ref{thmle2}, we obtain the MLE of $\os^2$ and $\ls^2$ by
$$\hat{\os}^2=\max_{1\leq j\leq K}\sigma^2_j; \ \ \hat{\ls}^2=\min_{1\leq j\leq K}\sigma^2_j.$$
Based on such estimations of upper and lower variance, extensive experiments on both NASDAQ Composite Index and S\&P 500 Index demonstrate the excellent performances of $G$-VaR model, a new benchmark predictor for value-at-risk based on the so-called $G$-normal distribution, which is superior to most existing benchmark VaR predictors (see \cite{pyy} and \cite{py}).

\begin{remark}
Theorem \ref{thmle} or Jin and Peng's optimal unbiased estimator can not be applied to this situation since the sequence $\{\sigma^2_j\}_{j=1}^K$  is not independent. In fact, it is $L$-dependent in sublinear case. Our results provide the theoretical foundation  for the widely used estimator of upper and lower variances in finance.
\end{remark}



\end{document}